\newtheorem{theorem}{Theorem}
\newtheorem{proposition}{Proposition}
\newenvironment{proof}{\mbox{\sc Proof:}}{$\Box$}
\newenvironment{proof*}{\mbox{\sc Proof:}\hspace{0.3 em}{\bf (*)}\hspace{0.3 em}}{$\Box$}
\begin{document}

\title{Characterization of the Ito Integral}
\author{Lars Tyge Nielsen \\ Department of Mathematics \\ Columbia University}
\date{December 2018}
\maketitle

\begin{abstract}
This paper provides an existence-and-uniqueness theorem characterizing the stochastic integral with respect to a Wiener process. The integral is represented as a mapping from the space of measurable and adapted pathwise locally integrable processes to the space of continuous adapted processes. It is characterized in terms of two properties: (1) how the stochastic integrals of simple processes are calculated and (2) how these integrals converge in probability when the time integrals of the squared integrands converge in probability.
\end{abstract}

\section{Introduction}

The Ito integral, or the stochastic integral with respect to a Wiener process, is used extensively in financial engineering and other areas of applied mathematics.  
Its construction is somewhat complicated, which in applied work often motivates a less-than-rigorous exposition or a limitation to integrands in $\mathcal{H}^{2}$, those that are square integrable with respect to the product of the probability measure and the Lebesgue measure on the time axis.

Defining the Ito integral only for $\mathcal{H}^{2}$-integrands, as in Shreve \cite[2004]{Shreve:04}, leads to further complications, even in applied work. Here are some examples.

(1) Ito's Lemma tells us that if $W$ is a Wiener process and $f$ is a twice continuously differentiable function, then $f(W)$ is the sum of a time integral and an Ito integral. Additional assumptions are needed to guarantee that that the Ito integrand is in $\mathcal{H}^{2}$. 

(2) The Martingale Representation Theorem, 
Rogers and Williams \cite[1987, Theorem 36.5]{Rogers-Williams:87},
says that a martingale with respect to the Wiener filtration is a stochastic integral. However, the integrand is not in 
$\mathcal{H}^{2}$ unless the martingale is square integrable.

(3) Dudley's Representation Theorem, Dudley \cite[1977]{Dudley:77},
says that if a random variable is measurable with respect to a sigma-algebra $\tilde{\mathcal{F}}_{T}^{W}$
from the augmented Wiener filtration, for some $T > 0$, then it arises as an Ito integral up to time $T$.
Only if the variable is square integrable can the integrand be chosen from $\mathcal{H}^{2}$.

This paper provides an easy way to define the Ito integral in the general case of measurable adapted pathwise locally integrable processes.
We present an existence-and-uniqueness theorem that characterizes the integral in terms of simple properties and
thus pins it down as a well-defined mathematical object.

One main benefit of the theorem is that someone who is interested primarily in applications -- for example in finance -- can skip the proof, and thus skip the entire cumbersome construction, and yet proceed with a completely mathematically rigorous understanding of the Ito integral.

The integral is described as a mapping from the space of measurable and adapted pathwise locally integrable processes to the space of continuous adapted processes. It is characterized in terms of how integrals of simple processes are calculated and how such integrals converge in probability when the time integrals of the squared integrands converge in probability.

\section{The Existence-and-Uniqueness Theorem}

All processes in this paper are understood to be one-dimensional.

A \emph{setup} is a quintuple $(\Omega, \mathcal{F},P,F,W)$ consisting of 
\begin{itemize}
\item
a complete probability space $(\Omega, \mathcal{F},P)$,
\item
an augmented filtration $F = (\mathcal{F}_{t})_{t \in [0,\infty)}$, and 
\item
a standard Wiener process $W$ with respect to $F$. 
\end{itemize}
The filtration does not need to be right-continuous.

We shall assume that a specific setup has been chosen. 

Let
$\mathcal{L}^{2}$ denote the set of measurable and adapted processes that are pathwise locally square integrable with probability one. In other words, measurable and adapted processes $b$ such that for every $t \in [0,\infty)$,
\[ P \left( \int_{0}^{t} b^{2} \, ds \right) < \infty \]

These processes will be our integrands. We shall define and characterize the stochastic integral of processes in $\mathcal{L}^{2}$ with respect the Wiener process $W$.

We could alternatively assume the integrands to be progressive processes, since every measurable and adapted process has a progressive modification. See Kaden and Potthoff \cite[2004, Theorem~1]{Kaden-Potthoff:04},
Ondrej\'{a}t and Seidler \cite[2013, Theorem~0.1]{Ondrejat-Seidler:13}.

A process $b$ is \emph{simple}\index{process!simple}\index{simple process} if 
there exists a finite
sequence of times $0 = t_{0} < t_{1} < \cdots < t_{n}$ such that 
\[ b = b(t_{0}) 1_{\{0\}} +
\sum_{i = 1}^{n}b(t_{i})1_{(t_{i-1},t_{i}]} \]
and such that
$b(t_{0})$ is measurable with respect to $\mathcal{F}_{0}$ and for each $i = 1, \ldots ,n$, $b(t_{i})$ is measurable with respect to $\mathcal{F}_{t_{i-1}}$.

A simple process is measurable and adapted.

Let
$\mathcal{H}^{2} \subset \mathcal{L}^{2}$ denote the set of measurable and adapted locally square integrable processes. In other words, measurable and adapted processes $b$ such that for every $t \in [0,\infty)$,
\[ \mathbb{E} \int_{0}^{t} b^{2} \, ds < \infty \]
A simple process $b$ is in $\mathcal{H}^{2}$ if and only if $\mathbb{E}b(t)^{2} < \infty$ for every $t > 0$.

Let $\mathcal{C}$ denote the set of adapted continuous processes, and identify two such processes if they are indistinguishable.

A mapping
$I: \mathcal{L}^{2} \rightarrow \mathcal{C}$ will be called a \emph{stochastic integral mapping}
if it has the following two properties:
\begin{enumerate}
\item
\emph{Integrals of Simple Processes}: 
Let $b \in \mathcal{L}^{2}$ be simple, let $t \in [0,\infty)$, and let
$0 = t_{0} < t_{1} < \cdots < t_{n} \leq t$ 
be a finite sequence of times 
such that 
\[ b = b(t_{0}) 1_{\{0\}} +
\sum_{i = 1}^{n}b(t_{i})1_{(t_{i-1},t_{i}]} \]
Then
\[ I(b)(t) = \sum_{i=1}^{n}
b(t_{i})(W(t_{i})-W(t_{i-1})) \]
with probability one.
\item
\emph{Convergence in Probability}: Let $t \in [0,\infty)$. If $b \in \mathcal{L}^{2}$ and 
$(b_{n})$ is a sequence of simple processes in $\mathcal{H}^{2}$ such that
\[ \int_{0}^{t} (b - b_{n})^{2} \, ds \rightarrow 0 \]
in probability, then
\[ I(b_{n})(t) \rightarrow I(b)(t) \]
in probability.
\end{enumerate}

\begin{theorem}
\label{charstochint-t}
\marginpar{charstochint-t}
Given the setup $(\Omega,\mathcal{F},P, F,W)$, there exists a unique stochastic integral mapping $I: \mathcal{L}^{2} \rightarrow \mathcal{C}$.
\end{theorem}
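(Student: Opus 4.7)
The plan is to prove uniqueness first via an approximation argument, then construct $I$ in three stages — on simple processes, on $\mathcal{H}^{2}$, and finally on $\mathcal{L}^{2}$ by localization — and conclude by verifying the two characterizing properties.

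\emph{Uniqueness.} If $I_{1}$ and $I_{2}$ are both stochastic integral mappings, Property~1 forces them to coincide on every simple process. The key density lemma to establish is: for each $b \in \mathcal{L}^{2}$ and each $t \in [0,\infty)$, there is a sequence of simple processes $b_{n} \in \mathcal{H}^{2}$ with $\int_{0}^{t} (b - b_{n})^{2}\,ds \to 0$ in probability. One obtains such $b_{n}$ by truncating $b$ in magnitude and in time so as to land in a bounded subset of $\mathcal{H}^{2}$ and then discretizing along a refining partition, using adaptedness to ensure each approximant is simple in the sense defined in the paper. Given the lemma, Property~2 yields $I_{1}(b)(t) = I_{2}(b)(t)$ almost surely for each $t$, and continuity together with the indistinguishability identification in $\mathcal{C}$ upgrades this to $I_{1}(b) = I_{2}(b)$.

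\emph{Construction up to $\mathcal{H}^{2}$.} For a simple $b$, I define $I(b)$ by the formula in Property~1, extended to non-grid times by inserting the intermediate time as an extra node, and I check that the result is adapted, continuous, and independent of the representation chosen. When $b$ is also in $\mathcal{H}^{2}$ the Ito isometry $\mathbb{E}[I(b)(t)^{2}] = \mathbb{E}\int_{0}^{t} b^{2}\,ds$ follows from the independence of Wiener increments from the preceding filtration, and $I(b)$ is a continuous $L^{2}$-martingale. Extension to arbitrary $b \in \mathcal{H}^{2}$ is then the classical move: approximate in the norm $\|b\|_{t}^{2} = \mathbb{E}\int_{0}^{t} b^{2}\,ds$ by simple elements of $\mathcal{H}^{2}$ and pass to the $L^{2}$-limit of the integrals; Doob's maximal inequality upgrades $L^{2}$-Cauchy on $[0,t]$ to uniform-on-compacts convergence in probability, producing a continuous adapted representative in $\mathcal{C}$.

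\emph{Extension to $\mathcal{L}^{2}$ and verification of the axioms.} For $b \in \mathcal{L}^{2}$ I localize with $\tau_{n} = \inf\{t : \int_{0}^{t} b^{2}\,ds \ge n\}$, so that $b^{(n)} = b\,1_{[0,\tau_{n}]}$ lies in $\mathcal{H}^{2}$. The already-defined integrals $I(b^{(n)})$ agree on $[0,\tau_{n}]$ because $b^{(m)} - b^{(n)}$ vanishes there, and the $\mathcal{H}^{2}$-isometry applied to a stopped version gives zero. Since $\tau_{n} \uparrow \infty$ almost surely, the family pastes to a single continuous adapted process $I(b)$. The step I expect to be the main obstacle is the verification of Property~2 in full generality: given simple $b_{n} \in \mathcal{H}^{2}$ with $\int_{0}^{t}(b - b_{n})^{2}\,ds \to 0$ in probability, one must show $I(b_{n})(t) \to I(b)(t)$ in probability even though $b$ itself need not lie in $\mathcal{H}^{2}$. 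The tool is the ``stochastic Chebyshev'' estimate, valid for $c \in \mathcal{H}^{2}$ and $\varepsilon,\eta > 0$,
\[ P\bigl(|I(c)(t)| > \varepsilon\bigr) \;\le\; \frac{\eta}{\varepsilon^{2}} \;+\; P\Bigl(\int_{0}^{t} c^{2}\,ds > \eta\Bigr), \]
proved by applying Chebyshev and the Ito isometry to $c$ stopped at the first time $\int_{0}^{s} c^{2}\,du$ reaches $\eta$. Applied to $c = b^{(N)} - b_{n}$, with $N$ chosen so that $P(\tau_{N} < t)$ is small, this yields the required convergence. Property~1 on simple processes in $\mathcal{L}^{2} \setminus \mathcal{H}^{2}$ follows from the same localization, since the explicit finite sum defining $I(b)$ at a grid point coincides, on $\{\tau_{n} \ge t\}$, with the corresponding sum for $b^{(n)}$.
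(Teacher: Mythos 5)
Your proposal is correct, and its uniqueness half is essentially identical to the paper's: both hinge on the same density lemma (approximation of $b \in \mathcal{L}^{2}$ by simple processes in $\mathcal{H}^{2}$ in the sense that $\int_{0}^{t}(b-b_{n})^{2}\,ds \to 0$ in probability, which the paper takes from Liptser and Shiryaev, Lemma 4.5), followed by Property~1 to identify the integrals of the $b_{n}$ and Property~2 to pass to the limit. Where you genuinely diverge is existence: the paper delegates this entirely to the literature, and the construction it cites defines $I(b)(t)$ for general $b \in \mathcal{L}^{2}$ directly as the probability limit of $I(b_{n})(t)$ along simple approximants, so that Property~2 is nearly built into the definition. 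You instead go through $\mathcal{H}^{2}$ by the Ito isometry and then localize with the stopping times $\tau_{n} = \inf\{t : \int_{0}^{t} b^{2}\,ds \ge n\}$, which buys you a self-contained construction and the martingale structure on $\mathcal{H}^{2}$ along the way, at the price of having to verify Property~2 after the fact; your ``stochastic Chebyshev'' estimate is exactly the right tool for that (it is the same inequality, Lemma 4.6 in Liptser--Shiryaev, that powers the cited construction). Two points deserve more care than your sketch gives them: the pasting step and the claim that $I(b^{(m)})$ and $I(b^{(n)})$ agree on $[0,\tau_{n}]$ both rest on the local property $I(c)(\cdot \wedge \sigma) = I(c\,1_{[0,\sigma]})$ for stopping times $\sigma$, which does not follow from the isometry alone and needs its own argument (discrete approximation of $\sigma$, or optional stopping applied to $I(c)(s)^{2} - \int_{0}^{s} c^{2}\,du$); and for Property~1 on a simple $b \notin \mathcal{H}^{2}$ it is cleaner to truncate the values, setting $b_{K} = \sum_{i} b(t_{i})1_{\{|b(t_{i})| \le K\}}1_{(t_{i-1},t_{i}]}$, which is simple and in $\mathcal{H}^{2}$, and then invoke your already-proved Property~2, rather than to argue through $b^{(n)} = b\,1_{[0,\tau_{n}]}$, which is not a simple process in the paper's sense. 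Finally, note that adaptedness of the limit uses that the filtration is augmented, a hypothesis the paper's existence proof invokes explicitly and yours should too.
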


The theorem defines the stochastic integral, which is usually written in the form
\[ I(b)(t) = \int_{0}^{t} b \, dW \]
for $b \in \mathcal{L}^{2}$ and $t \in [0,\infty)$.

\section{Proof of the Theorem}

The proof relies on approximation of processes in $\mathcal{L}^{2}$ by simple processes.

\begin{proposition}
\label{simpprocapp-p}
Let $b \in \mathcal{L}^{2}$. For each $t \in [0,\infty)$, 
there exists a sequence
$(b_{n})$ of simple processes in $\mathcal{H}^{2}$ such that $b_{n}1_{(t,\infty)} = 0$ for all $n$ and
\[ \int_{0}^{t} (b - b_{n})^{2} \, ds \rightarrow 0 \]
with probability one (and hence, in probability).
\end{proposition}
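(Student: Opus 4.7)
The plan is to approximate $b$ by simple processes in three successive stages and then collapse them into a single sequence via a diagonal argument. After first replacing $b$ by a progressive modification (as noted after the definition of $\mathcal{L}^{2}$), so that the Fubini-type measurability arguments below go through cleanly, I would (i) truncate to obtain bounded processes, (ii) smooth by a backward moving average to obtain continuous adapted processes, and (iii) discretize on a partition of $[0,t]$ to obtain simple processes.

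For stage (i), set $b^{(M)} = b \, 1_{\{|b| \leq M\}}$. These are measurable, adapted, and bounded, and since $\int_{0}^{t} b^{2} \, ds < \infty$ almost surely, the pathwise dominated convergence theorem (with dominator $b^{2}$) gives $\int_{0}^{t}(b - b^{(M)})^{2}\,ds \to 0$ almost surely as $M \to \infty$. For stage (ii), define
\[ c^{(M,h)}(s) \;=\; \frac{1}{h}\int_{(s-h)^{+}}^{s} b^{(M)}(u)\,du. \]
Then $c^{(M,h)}$ is bounded by $M$, pathwise continuous, and adapted (by progressive measurability of $b^{(M)}$ and Fubini), and $c^{(M,h)}(s) \to b^{(M)}(s)$ for Lebesgue-almost every $s$, pathwise, by Lebesgue's differentiation theorem. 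The bounded convergence theorem then yields $\int_{0}^{t}(b^{(M)} - c^{(M,h)})^{2}\,ds \to 0$ almost surely as $h \to 0$. For stage (iii), for each $n$ take a partition $0 = s_{0} < s_{1} < \cdots < s_{k_n} = t$ of $[0,t]$ with mesh tending to zero and set
\[ b^{(M,h,n)} \;=\; c^{(M,h)}(0) \, 1_{\{0\}} + \sum_{i=1}^{k_n} c^{(M,h)}(s_{i-1}) \, 1_{(s_{i-1},\, s_i]}. \]
This is simple, vanishes on $(t,\infty)$, bounded by $M$, and hence lies in $\mathcal{H}^{2}$; and since $c^{(M,h)}$ is pathwise uniformly continuous on $[0,t]$, $b^{(M,h,n)} \to c^{(M,h)}$ uniformly almost surely as $n \to \infty$, so $\int_{0}^{t} (c^{(M,h)} - b^{(M,h,n)})^{2}\,ds \to 0$ almost surely.

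Collapsing to a single sequence is a routine diagonal extraction: using the elementary bound $\int_{0}^{t}(b - b^{(M,h,n)})^{2}\,ds \leq 3\bigl[\int_{0}^{t}(b-b^{(M)})^{2}\,ds + \int_{0}^{t}(b^{(M)}-c^{(M,h)})^{2}\,ds + \int_{0}^{t}(c^{(M,h)}-b^{(M,h,n)})^{2}\,ds\bigr]$, for each $k$ choose $M_k, h_k, n_k$ such that
\[ P\Bigl(\int_{0}^{t}(b - b^{(M_k,h_k,n_k)})^{2}\,ds > 2^{-k}\Bigr) < 2^{-k}, \]
and the Borel--Cantelli lemma delivers almost sure convergence of $b_k := b^{(M_k,h_k,n_k)}$ to $b$ in $L^{2}([0,t])$. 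The main technical issue I anticipate is stage (ii): keeping adaptedness intact through the smoothing is what forces the one-sided (backward) average together with the progressive modification. Once that is set up, the remaining steps are pathwise applications of the classical convergence theorems plus the standard Borel--Cantelli diagonalization.
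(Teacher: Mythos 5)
Your proof is correct, but it is worth noting that the paper does not actually prove this proposition at all: it simply cites Liptser and Shiryaev [2001, Lemma~4.5]. What you have written is a self-contained version of the standard argument behind that lemma, via the three-stage scheme of truncation, one-sided mollification, and discretization, followed by a Borel--Cantelli diagonalization. All the delicate points are handled properly: the backward average $\frac{1}{h}\int_{(s-h)^{+}}^{s}b^{(M)}(u)\,du$ preserves adaptedness precisely because you pass to a progressive modification first and integrate only over the past; the Lebesgue differentiation theorem applies pathwise to the bounded truncation; the discretized process assigns to $(s_{i-1},s_i]$ a value measurable with respect to $\mathcal{F}_{s_{i-1}}$, as the paper's definition of a simple process requires; boundedness by $M$ puts the approximants in $\mathcal{H}^{2}$; and the supports lie in $[0,t]$ as required. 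One small step you should make explicit: replacing $b$ by its progressive modification $\tilde{b}$ is harmless because, by Fubini applied to the jointly measurable set $\{b\neq\tilde{b}\}$, almost every path of $b$ agrees with the corresponding path of $\tilde{b}$ Lebesgue-almost everywhere on $[0,t]$, so $\int_{0}^{t}(b-b_n)^{2}\,ds=\int_{0}^{t}(\tilde{b}-b_n)^{2}\,ds$ almost surely and the conclusion transfers back to $b$ itself. With that remark added, your argument is a complete proof of the proposition and, unlike the paper, does not outsource it to the literature.
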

\begin{proof}
Liptser and Shiryaev \cite[2001, Lemma 4.5]{Liptser-Shiryaev:01}\index{Liptser, R. S.}\index{Shiryaev, A. N.}.
\end{proof}

\medskip
The existence statement in Theorem~\ref{charstochint-t} is amply proved in the literature, as we shall now document.

{\sc Proof of Theorem~\ref{charstochint-t} -- Existence}

An integral with properties 1 and 2 is constructed, and thus its existence is shown, for example in
Arnold \cite[1974]{Arnold:74}\index{Arnold, L.}
and Liptser and Shiryaev \cite[2001]{Liptser-Shiryaev:01}\index{Liptser, R. S.}\index{Shiryaev, A. N.}.
The construction is summarized in Nielsen \cite[1999]{Nielsen:99}.

First, the integral $I(b)(t)$ of a simple process is defined for each $t \in [0,\infty)$, by the formula in property 1. It is observed that $I(b)(t)$ is measurable with respect to $\mathcal{F}_{t}$. 
See Liptser and Shiryaev \cite[2001, pages 95--96]{Liptser-Shiryaev:01}\index{Liptser, R. S.}\index{Shiryaev, A. N.}.

Then the integral is defined for a general process $b \in \mathcal{L}^{2}$ by a procedure of approximation in probability. 
If $b$ is approximated in probability by a sequence $(b_{n})$ of simple process as in Proposition~\ref{simpprocapp-p}, then the sequence of stochastic integrals $I\left( b_{n}\right)(t)$ will converge in probability to some random variable which is unique with probability one and independent of the approximating sequence $(b_{n})$. This limiting random variable is defined to be the stochastic integral $I(b)(t)$ of $b$. See
Liptser and Shiryaev \cite[2001, pages 107--108]{Liptser-Shiryaev:01}\index{Liptser, R. S.}\index{Shiryaev, A. N.}.

This definition of the integral is obviously consistent with the formula in property 1.

The integral $I(b)(t)$ is measurable with respect to $\mathcal{F}_{t}$ because it is the probability limit of the integrals  $I\left(b_{n}\right)(t)$, which are measurable with respect to $\mathcal{F}_{t}$, and because $\mathcal{F}_{t}$ is augmented.

Finally, it is shown that the process $I(b)$ has a continuous modification.
See
Liptser and Shiryaev \cite[2001, pp.\ 108--109]{Liptser-Shiryaev:01}\index{Liptser, R. S.}\index{Shiryaev, A. N.}.
The continuous modification is unique up to indistinguishability, and it is adapted, because the filtration is augmented.

Property 2, convergence in probability, is shown in
Liptser and Shiryaev \cite[2001, pp.\ 107--108]{Liptser-Shiryaev:01}\index{Liptser, R. S.}\index{Shiryaev, A. N.}.

$\Box$

\newpage

{\sc Proof of Theorem~\ref{charstochint-t} -- Uniqueness}

Suppose $I$ and $J$ are stochastic integral mappings $\mathcal{L}^{2} \rightarrow \mathcal{C}$.

Let $b \in \mathcal{L}^{2}$.

Let $t \in [0,\infty)$. By Proposition~\ref{simpprocapp-p}, there exists a sequence
$(b_{n})$ of simple processes in $\mathcal{H}^{2}$ with $1_{(t,\infty)}b_{n} = 0$ for all $n$, such that
\[ \int_{0}^{t} (b - b_{n})^{2} \, ds \rightarrow 0 \]
in probability.
For each $n$,
there exists a finite sequence of times 
$0 = t_{0} < t_{1} < \cdots < t_{n} \leq t$ 
such that
\[ b_{n} = b_{n}(t_{0}) 1_{\{0\}} +
\sum_{i = 1}^{m}b_{n}(t_{i})1_{(t_{i-1},t_{i}]} \]
By the definition of a stochastic integral mapping, property 1, 
\[ I\left(b_{n}\right)(t) = \sum_{i=1}^{m}
b_{n}(t_{i})(W(t_{i})-W(t_{i-1})) = J\left(b_{n}\right)(t) \]
with probability one.

By the definition of a stochastic integral mapping, property 2, 
\[ I\left(b_{n}\right)(t) \rightarrow I(b)(t) \]
and
\[ J\left(b_{n}\right)(t) \rightarrow J(b)(t) \]
in probability. This implies that $I(b)(t) = J(b)(t)$ with probability one.

Since $I(b)$ and $J(b)$ are continuous and stochastically equivalent, they are indistinguishable.

$\Box$

\bibliographystyle{plain}
\bibliography{bookrefs,bookrefsadd,consolidated}

\end{document}